	\newtheorem{thm}{Theorem}[section]
\newcommand{\To}{\longrightarrow}
\newcommand{\R}{\mathbb{R}}
\DeclareMathOperator{\Cl}{Cl}
\begin{document}

\title{The sensitivity conjecture, induced subgraphs of cubes, and Clifford algebras} 

\author{Daniel V. Mathews}
\affil{School of Mathematics,
Monash University}

\author{Daniel V. Mathews}

\date{}

\maketitle

\begin{abstract}
We give another version of Huang's proof that an induced subgraph of the $n$-dimensional cube graph containing over half the vertices has maximal degree at least $\sqrt{n}$, which implies the Sensitivity Conjecture.
This argument uses Clifford algebras of positive definite signature in a natural way. We also prove a weighted version of the result.
\end{abstract}


\section{Introduction}

In \cite{Huang_Induced_subgraphs}, Huang proves a theorem about induced subgraphs of hypercube graphs, and uses it to prove the Sensitivity Conjecture.
The main result of that paper is as follows. Let $n \geq 1$ be a integer, and let $Q^n$ be the $n$-dimensional (hyper)cube graph, with $2^n$ vertices given by $\{0,1\}^n$ and edges connecting points which differ at exactly one coordinate. For a graph $G$, let $\Delta(G)$ denote the maximum degree of its vertices.

\begin{thm}
\label{thm:main_thm}
Let $H$ be a subgraph of $Q^n$ induced by a subset of $(2^{n-1} + 1)$ vertices. Then $\Delta(H) \geq \sqrt{n}$. 
\end{thm}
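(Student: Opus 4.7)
The plan is to construct a symmetric signed adjacency matrix $A$ of $Q^n$ whose square is $nI$ using the Clifford algebra $\Cl(n)$ of positive-definite signature, and to conclude via Cauchy interlacing. Recall that $\Cl(n)$ has generators $e_1, \ldots, e_n$ satisfying $e_i^2 = 1$ and $e_i e_j = -e_j e_i$ for $i \neq j$, so the identity
\[
\Bigl(\sum_{i=1}^n e_i\Bigr)^2 = \sum_i e_i^2 + \sum_{i \neq j} e_i e_j = n
\]
holds there, the cross-terms cancelling in pairs by anticommutation.

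First I would identify the $2^n$ vertices of $Q^n$ with subsets $S \subseteq \{1, \ldots, n\}$ and simultaneously with the standard monomial basis $\{e_S\}$ of $\Cl(n)$ (where $e_S = e_{i_1} \cdots e_{i_k}$ is the ordered product over $i_1 < \cdots < i_k$ in $S$), giving an identification between the vertex space of $Q^n$ and $\Cl(n)$ as a $2^n$-dimensional real vector space. Left multiplication by $e_i$ sends $e_S \mapsto \pm e_{S \triangle \{i\}}$, so its matrix $B_i$ in this basis is a signed adjacency matrix for the direction-$i$ edges of $Q^n$. I would then set $A := B_1 + \cdots + B_n$, a signed adjacency matrix for all of $Q^n$; the identity above immediately yields $A^2 = nI$.

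Next I would establish symmetry of $A$ and deduce its spectrum. With respect to the inner product on $\Cl(n)$ making $\{e_S\}$ orthonormal, positive-definite signature ($e_i^2 = +1$) makes each $e_i$ self-adjoint, hence each $B_i$, and therefore $A$, is symmetric. Combined with $A^2 = nI$ and $\text{tr}(A) = 0$, this forces the spectrum of $A$ to consist of $\pm\sqrt{n}$, each eigenvalue with multiplicity $2^{n-1}$. For an induced subgraph $H$ on $2^{n-1}+1$ vertices, let $B$ be the corresponding principal submatrix of $A$; Cauchy's interlacing inequality then yields $\lambda_{\max}(B) \geq \sqrt{n}$, while $\lambda_{\max}(B) \leq \Delta(H)$ since $B$ has entries in $\{-1, 0, 1\}$ with at most $\Delta(H)$ nonzero entries per row. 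Combining gives $\Delta(H) \geq \sqrt{n}$.

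The step I expect to require the most care is the symmetry of $A$: left multiplication $e_i \cdot e_S$ picks up a sign $(-1)^p$ with $p = |\{j \in S : j < i\}|$, and one must verify that the entries $(B_i)_{S, T}$ and $(B_i)_{T, S}$ agree in sign for $T = S \triangle \{i\}$. This is a short combinatorial computation, but it is the sole place where positive-definite signature is essential: with negative-definite signature the $e_i$ would be skew-adjoint, $A^2$ would equal $-nI$, and the spectrum would be purely imaginary, destroying the interlacing step. This is precisely the reason the abstract insists on positive-definite Clifford algebras.
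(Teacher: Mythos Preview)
Your proof is correct and follows essentially the same route as the paper: both use the Clifford relation $(e_1 + \cdots + e_n)^2 = n$ to obtain a signed adjacency operator on $\Cl(n)$ squaring to $nI$, and then exploit its $\pm\sqrt{n}$ eigenspace decomposition (the paper uses right multiplication rather than left, which is cosmetic). The only packaging difference is that you invoke Cauchy interlacing and the row-sum spectral bound as black boxes, whereas the paper unpacks these inline---a dimension count produces a nonzero $\sqrt{n}$-eigenvector $x$ supported on the chosen vertex set, and comparing coefficients at the vertex where $|x_v|$ is maximal yields $\Delta(H) \geq \sqrt{n}$ directly.
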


Huang's proof uses a sequence of matrices $A_n$, which have size $2^n \times 2^n$. In this short paper we observe that these matrices are the matrices of multiplication by a certain natural element $S$ in positive definite Clifford algebras. From this we are able to reformulate the proof in terms of Clifford algebras. 

We note that Karasev in \cite{Karasev_exterior_algebra} has related Huang's matrices $A_n$ to exterior algebras; these are special cases of Clifford algebras when the quadratic form is zero. By using a nontrivial quadratic form for our Clifford algebras we are able to see a close connection between the multiplicative structure of the Clifford algebras, and the combinatorics of the cube graph and its subgraphs.

Our approach generalises to give a ``weighted" version of the theorem as follows. Let $a_1, \ldots, a_n$ be non-negative real numbers. Each edge of $Q^n$ joins points whose coordinates differ in one place; for a vertex $v$ denote by $v(i)$ the unique vertex which differs from $v$ only in the $i$th place, so that the vertices adjacent to $v$ in $Q^n$ are precisely $v(1), \ldots, v(n)$. Let edges joining points whose coordinates differ in the $i$th place (i.e. vertices of the form $v$ and $v(i)$) have weight $a_i$. Then for a vertex $v$ of a subgraph $H$ of $Q^n$, its \emph{weighted degree} is the sum of the weights on adjacent edges. Denote by $\Delta_a (H)$ the maximum weighted degree of the vertices of $H$.
\begin{thm}
\label{thm:weighted_thm}
Let $H$ be a subgraph of $Q^n$ induced by a subset of $(2^{n-1} + 1)$ vertices, and let the weights $a_1, \ldots, a_n$ be any non-negative real numbers. Then $\Delta_a (H) \geq \sqrt{a_1^2 + \cdots + a_n^2}$.
\end{thm}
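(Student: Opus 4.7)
The plan is to generalize Huang's construction by replacing the element $e_1 + \cdots + e_n$ with the weighted element $S = a_1 e_1 + \cdots + a_n e_n$ in the positive definite Clifford algebra $\Cl_n$ generated by $e_1, \ldots, e_n$ subject to $e_i^2 = 1$ and $e_i e_j = -e_j e_i$ for $i \neq j$. Expanding $S^2$ and using anticommutation to cancel the cross terms $a_i a_j (e_i e_j + e_j e_i) = 0$ in pairs yields $S^2 = (a_1^2 + \cdots + a_n^2) \cdot 1$.

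Next I consider the matrix $A$ of left multiplication by $S$ with respect to the standard basis $\{e_I\}$ of $\Cl_n$ indexed by subsets $I \subseteq \{1, \ldots, n\}$, which correspond naturally to vertices of $Q^n$ via characteristic functions. Using the identity $e_i e_I = (-1)^{|\{j \in I : j < i\}|} e_{I \triangle \{i\}}$, one sees that $A$ has entries $\pm a_i$ between pairs of vertices differing only in the $i$th coordinate, and zero entries between non-adjacent pairs. A direct sign check shows $A$ is symmetric. Since $A^2 = (a_1^2 + \cdots + a_n^2) I$ and $A$ has zero diagonal (hence trace zero), the eigenvalues of $A$ are $\pm \sqrt{a_1^2 + \cdots + a_n^2}$, each with multiplicity $2^{n-1}$.

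The argument concludes exactly as in Huang's proof. Let $B$ be the principal submatrix of $A$ indexed by the $2^{n-1}+1$ vertices of $H$. By Cauchy's interlacing theorem, the largest eigenvalue of $B$ satisfies $\lambda_{\max}(B) \geq \sqrt{a_1^2 + \cdots + a_n^2}$. On the other hand, since $B$ is real and symmetric, $\lambda_{\max}(B)$ is bounded by the maximum absolute row sum of $B$. The row of $B$ indexed by a vertex $v \in H$ has $L^1$ norm equal to $\sum_{i \,:\, v(i) \in H} a_i$, which is precisely the weighted degree of $v$ in $H$. Taking the maximum over $v$ yields $\Delta_a(H) \geq \sqrt{a_1^2 + \cdots + a_n^2}$.

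The main obstacle I anticipate is verifying the symmetry of $A$ through the sign conventions inherent in Clifford multiplication: one must check that the signs $(-1)^{|\{j \in I : j < i\}|}$ and $(-1)^{|\{j \in J : j < i\}|}$ arising in $A_{JI}$ and $A_{IJ}$ actually agree whenever $J = I \triangle \{i\}$. This is the point where the positive-definite signature $e_i^2 = +1$ is genuinely used, in contrast to Karasev's exterior algebra setting or an indefinite Clifford signature.
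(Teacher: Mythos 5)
Your proof is correct and uses the same key Clifford-algebraic observation as the paper (that $S_a = a_1 e_1 + \cdots + a_n e_n$ satisfies $S_a^2 = a_1^2 + \cdots + a_n^2$), but it packages the rest of the argument differently, in a style closer to Huang's original matrix-theoretic proof. You introduce the matrix $A$ of multiplication by $S_a$, verify it is symmetric with zero diagonal, deduce the $\pm\sqrt{\sum a_i^2}$ eigenvalues each with multiplicity $2^{n-1}$ via the trace, and then invoke Cauchy's interlacing theorem and the bound $\lambda_{\max}(B) \leq \|B\|_\infty$. The paper instead stays inside $\Cl(n)$: it identifies the eigenspaces as principal left ideals $\Cl(n)\alpha_\pm$, gets their equal dimensions $2^{n-1}$ via the grading involution $\phi$ rather than a trace computation, replaces Cauchy interlacing with the elementary observation that a $(2^{n-1}+1)$-dimensional subspace must meet a $2^{n-1}$-dimensional one nontrivially, and replaces the operator-norm bound with a direct ``largest coefficient'' comparison of the eigenvector equation. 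The two routes are logically equivalent (the paper essentially proves, by hand, exactly the instances of interlacing and the row-sum bound it needs), so what your version buys is brevity by citing standard matrix theory, while what the paper's version buys is a self-contained argument that never leaves the Clifford algebra and does not even require observing that $M_{S_a}$ is symmetric. Your flagged concern about the sign check for symmetry is well-founded and does work out: for $J = I \triangle \{i\}$ the sets $\{j \in I : j < i\}$ and $\{j \in J : j < i\}$ coincide since $I$ and $J$ differ only at $i$, so $A_{IJ} = A_{JI}$; note also that you should set aside the degenerate case $a_1 = \cdots = a_n = 0$ (where the claim is trivial) before speaking of eigenvalue multiplicities.
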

Setting $a_1 = \cdots = a_n = 1$ in theorem \ref{thm:weighted_thm} recovers theorem \ref{thm:main_thm}.

A connection between Huang's proof and Clifford algebras has also been given by Tao \cite{Tao_Huang_blog}.

After the first version of this paper was posted, the author was informed that essentially identical observations (including theorems \ref{thm:main_thm} and \ref{thm:weighted_thm}) had independently been made by T. Mrowka some weeks earlier, although they had not been been published \cite{Mrowka_email}.

{\flushleft \textbf{Acknowledgments.}} The author is supported by Australian Research Council grant DP160103085.

\section{Clifford algebras}

\subsection{Background}

We recall some well-known notions about Clifford algebras, which can be found in any standard text or introductory article on the subject (e.g. \cite{Harvey_Spinors_and_calibrations, Lounesto_Clifford, Todorov_Clifford}).
Let $V$ be a real vector space, equipped with a symmetric bilinear form $B \colon V \times V \To \R$, or equivalently, a quadratic form $Q \colon V \To \R$, related by $Q(v) = B(v,v)$ and $B(v,w) = \frac{1}{2} \left( Q(v+w) - Q(v) - Q(w) \right)$. The \emph{Clifford algebra} $\Cl(V,Q)$ is the associative algebra freely generated by $V$, subject to the relations
\[
v^2 = B(v,v) = Q(v)
\quad \text{or equivalently} \quad
vw+wv = 2B(v,w)
\]
for all $v,w \in V$. Alternatively, $\Cl(V,Q)$ is given by the tensor algebra of $V$, modulo the ideal generated by elements of the form $v \otimes v - Q(v)$.

We are interested here in positive definite $B$ and $Q$. If $V$ has dimension $n$, and $e_1, \ldots, e_n$ form an orthonormal basis of $V$ with respect to $B$, then the relations imply that for all $1 \leq i,j \leq n$,
\[
e_i^2 = 1
\quad \text{and} \quad
e_i e_j = - e_j e_i.
\]
As a vector space, $\Cl(V,Q)$ has dimension $2^n$ and a basis is given by $e_{i_1} \cdots e_{i_k}$, for each sequence $1 \leq i_1 < \cdots < i_k \leq n$. This includes the empty sequence, whose corresponding basis element is the identity $1$. In the positive definite case $\Cl(V,Q)$ only depends on the dimension $n$ and we write $\Cl(n)$ for $\Cl(V,Q)$. It is known that $\Cl(n)$ is given as a direct sum of one or two matrix algebras, over the real numbers, complex numbers or quaternions, depending on $n$ modulo $8$.

The involution $\phi \colon V \To V$ given by $\phi(v) = -v$ extends to an involution of $\Cl(n)$, which we also denote $\phi$, such that for each $1 \leq i_1 < \cdots < i_k \leq n$, $\phi(e_{i_1} \cdots e_{i_k}) = (-1)^k e_{i_1} \cdots e_{i_k}$.

\subsection{Clifford algebras and cubes}

We can identify the basis element $e_{i_1} \cdots e_{i_k}$ (where $1 \leq i_1 < \cdots < i_k \leq n$) of for $\Cl(n)$ with the vertex of $Q^n$ with ones in positions $i_1, \ldots, i_k$ and zeroes elsewhere. Thus we write $e_v$ for the basis element corresponding to the vertex $v \in \{0,1\}^n$. A general element $x \in \Cl(n)$ can be written uniquely as $x = \sum_v x_v e_v$, where the sum is over vertices $v \in \{0,1\}^n$ of $Q^n$ and each $x_v \in \R$.

Observe that multiplying a basis element $e_v$ by $e_i$, for $1 \leq i \leq n$ sends $e_v$ to $\pm e_{v(i)}$. In other words, multiplication by $e_i$, up to sign, permutes basis elements by translating them along edges of $Q^n$ in the $i$th direction.

\subsection{A Clifford element for counting degrees}

Consider the element $S = e_1 + \cdots + e_n \in \Cl(n)$. Observe that $S^2 = n$.

For each vertex $v$ then $e_v S$ is a signed sum of $e_w$ over the vertices $w$ adjacent to $v$ in $Q^n$, as is $S e_V$ (but in general with different signs). That is, $e_v S = \sum_{i=1}^n \varepsilon_i e_{v(i)}$ where each $\varepsilon_i = \pm 1$.

Similarly, if $x \in \Cl(n)$ is a sum of basis elements $x = \sum_{w\in W} e_w$, over some subset $W \subseteq \{0,1\}^n$, then the coefficient of $e_v$ in $x S$ is given by a sum of $\pm 1$s, one for each vertex of $W$ adjacent to $v$. Thus the coefficient of $e_v$ in $xS$ is bounded above by the degree of $v$ in the subgraph of $Q^n$ induced by $W$. In this way, multiplication by $S$ can be used to bound degrees of vertices.

We therefore consider the map $M_S \colon \Cl(n) \To \Cl(n)$ given by multiplication on the right by $S$, i.e. $M_S (v) = v S$. Observe that
\[
\left( \sqrt{n} + S \right) S 
= \sqrt{n} \left( \sqrt{n} + S \right)
\quad \text{and} \quad
\left( -\sqrt{n} + S \right) S
= -\sqrt{n} \left( - \sqrt{n} + S \right)
\]
so $\sqrt{n} + S$ and $-\sqrt{n} + S$ are eigenvectors of $M_S$ with eigenvalues $\sqrt{n}$ and $-\sqrt{n}$ respectively. For convenience write $\alpha_+ = \sqrt{n} + S$ and $\alpha_- = - \sqrt{n} + S$. 

Indeed then the principal left ideals $\Cl(n) \alpha_+$ and $\Cl(n) \alpha_-$ are contained in the $\sqrt{n}$ and $-\sqrt{n}$ eigenspaces respectively. Hence their intersection is zero, but as $\alpha_+ - \alpha_- = 2\sqrt{n}$, they span $\Cl(n)$. Hence they are the entire eigenspaces and we have
\[
\Cl(n) = \Cl(n) \alpha_+ \oplus \Cl(n) \alpha_- = \ker \left( M_S - \sqrt{n} \right) \oplus \ker \left( M_S + \sqrt{n} \right).
\]
Since $\alpha_+ \alpha_- = \alpha_- \alpha_+ = 0$, this is in fact a product of rings.
Indeed, we have $\alpha_+^2 = 2 \sqrt{n} \alpha_+$ and $\alpha_-^2 = -2\sqrt{n} \alpha_-$, so $\frac{1}{2\sqrt{n}} \alpha_+$ and $\frac{1}{2\sqrt{n}} \alpha_-$ are complementary orthogonal idempotents. One can check that the matrix of $M_S$ with respect to the lexicographically ordered basis is the matrix $A_n$ of \cite{Huang_Induced_subgraphs}.

The involution $\phi$ takes $\Cl(n) \alpha_+$ to $\Cl(n) \alpha_-$ and vice versa, hence both have dimension $2^{n-1}$ as vector spaces.

More generally, if we take real numbers $a_1, \ldots, a_n$ we can define $S_a = a_1 e_1 + \cdots + a_n e_n$. If $x = \sum_{w \in W} e_w$, over some subset $W \subseteq \{0,1\}^n$, then the coefficient of $e_v$ in $xS$ is given by a sum of $\pm a_i$ terms, one for each vertex of $W$ adjacent to $v$. That is, the coefficient of $e_v$ in $xS$ is $\sum_i \varepsilon_i a_i$, where each $\varepsilon_i = \pm 1$, and the sum is over $1 \leq i \leq n$ such that $v(i) \in W$. This coefficient is bounded above by the weighted degree of $v$ in the subgraph of $Q^n$ induced by $w$. We have $S_a^2 = a_1^2 + \cdots + a_n^2$; multiplication $M_{S_a}$ by $S_a$ on the right has eigenvectors $\alpha_\pm = \pm \sqrt{a_1^2 + \cdots + a_n^2} + S_a$ with eigenvalues $\pm \sqrt{a_1^2 + \cdots + a_n^2}$. If not all $a_i$ are zero, we obtain a direct sum of rings $\Cl(n) \alpha_{+} \oplus \Cl(n) \alpha_{-} = \ker \left( M_{S_a} - \sqrt{a_1^2 + \cdots + a_n^2} \right) \oplus \ker \left( M_{S_a} + \sqrt{a_1^2 + \cdots + a_n^2} \right)$.

\section{Proofs of theorems}

Let $W \subset \{0,1\}^n$ be a set of $2^{n-1}+1$ vertices of $Q^n$, and let $H$ be the subgraph of $Q^n$ induced by $W$. For a vertex $v \in W$, denote by $N(v)$ its neighbours in $H$, i.e. those vertices in $W$ adjacent to $v$. Thus $|N(v)|$ is the degree of $v$ in $W$, and in theorem \ref{thm:main_thm} we want to show some $|N(v)| \geq \sqrt{n}$.

Let $C_H \subset \Cl(n)$ be the vector subspace spanned by $e_w$, over all $w \in W$. In other words, $C_H = \bigoplus_{w \in W} \R e_w$.

\begin{proof}[Proof of theorem \ref{thm:main_thm}]
As $C_H$ has dimension $2^{n-1} + 1$, it has nontrivial intersection with $\Cl(n) \alpha_+ = \ker(M_S - \sqrt{n})$. Let $0 \neq x = \sum_{w \in W} x_w e_w$ lie in the intersection. Then $x(S - \sqrt{n}) = 0$, so $xS = \sqrt{n} x$.

Now in $xS$, for any $v \in \{0,1\}^n$, the coefficient of $e_v$ is given by a sum $\sum_{w \in N(v)} \varepsilon_w x_w$, where each $\varepsilon_w = \pm 1$, which is at most $\sum_{w \in N(v)} \left| x_w \right|$ in absolute value. On the other hand, in $\sqrt{n} \; x$ the coefficient of $e_v$ is of course $\sqrt{n} \; x_v$. So $\sqrt{n} \; x_v = \sum_{w \in N(v)} \varepsilon_w x_w$ and hence $\sqrt{n} \left| x_v \right| \leq \sum_{w \in N(v)} \left| x_w \right|$.

This implies that some vertex of $H$ has degree at least $\sqrt{n}$. Indeed, let $v_0$ be a vertex such that the coefficient $x_v$ is largest in absolute value, i.e. $|x_{v_0}| \geq |x_v|$ for all $v \in \{0,1\}^n$. Then we have
\[
\sqrt{n} \; \left| x_{v_0} \right| 
\leq \sum_{w \in N(v_0)} \left| x_w \right|
\leq \sum_{w \in N(v_0)} \left| x_{v_0} \right|
= \left| N(v_0) \right| \; \left| x_{v_0} \right|.
\]
Thus $\left| N(v_0) \right| \geq \sqrt{n}$, i.e. $v_0$ has degree at least $\sqrt{n}$, so $\Delta(H) \geq \sqrt{n}$.
\end{proof}

\begin{proof}[Proof of theorem \ref{thm:weighted_thm}]
If all $a_i$ are zero the result is immediate, so assume $a_1^2 + \cdots + a_n^2 > 0$.
We again have a nontrivial intersection of $C_H$ with $\Cl(n) \alpha_{+} = \ker \left( M_{S_a} - \sqrt{a_1^2 + \cdots + a_n^2} \right)$, and we take $0 \neq x = \sum_{w \in W} x_w e_w$ in the intersection. We have 
$x S_a = \sqrt{a_1^2 + \cdots + a_n^2} \; x$, and comparing coefficients of $e_v$ gives $\sqrt{a_1^2 + \cdots + a_n^2} \; x_v = \sum_{i=1}^n \varepsilon_i a_i x_{v(i)}$; here each $\varepsilon_i = \pm 1$ and the nonzero terms in this sum correspond precisely to the neighbours of $v$ in the vertex set $W$ of $H$. Taking $v_0$ such that $x_{v_0}$ is largest in absolute value we then have
\[
\sqrt{a_1^2 + \cdots + a_n^2} \; \left| x_{v_0} \right|
= \left| \sum_{i=1}^n \varepsilon_i a_i x_{v_0 (i)} \right|
\leq \sum_{i=1}^n a_i \left| x_{v_0 (i)} \right| 
\leq \left| x_{v_0} \right| \sum_{v_0 (i) \in W} a_i 
\]
Now $\sum_{v_0(i) \in W} a_i$ is the weighted degree of $v_0$ in $W$, and we have shown it is at least $\sqrt{a_1^2 + \cdots + a_n^2}$.
\end{proof}

\small

\bibliography{sensitivity_conjecture_clifford_algebras}
\bibliographystyle{amsplain}

\end{document}